\newtheorem{thm}{Theorem}[section]
\newtheorem{lemma}[thm]{Lemma}
\newtheorem{prop}[thm]{Proposition}
\newtheorem{cor}[thm]{Corollary}
\theoremstyle{definition}
\theoremstyle{definition}
\newtheorem{defn}[thm]{Definition}
\newcommand{\supp}{\ensuremath{\operatorname{supp}}}
\newcommand{\norm}[1]{{\left\|{#1}\right\|}}
\newcommand{\abs}[1]{{\left|{#1}\right|}}
\newcommand{\scal}[1]{{\left\langle{#1}\right\rangle}}
\newcommand{\set}[1]{{\left\{{#1}\right\}}}
\newcommand{\al}{\ensuremath{\alpha}}
\newcommand{\be}{\ensuremath{\beta}}
\newcommand{\ga}{\ensuremath{\gamma}}
\newcommand{\Ga}{\ensuremath{\Gamma}}
\newcommand{\de}{\ensuremath{\delta}}
\newcommand{\ze}{\ensuremath{\zeta}}
\newcommand{\ka}{\ensuremath{\kappa}}
\newcommand{\La}{\ensuremath{\Lambda}}
\newcommand{\la}{\ensuremath{\lambda}}
\newcommand{\sig}{\ensuremath{\sigma}}
\newcommand{\vphi}{\ensuremath{\varphi}}
\newcommand{\om}{\ensuremath{\omega}}
\newcommand{\bears}{\begin{eqnarray*}}
\newcommand{\eears}{\end{eqnarray*}}
\newcommand{\mc}[1]{\ensuremath{\mathcal{#1}}}
\newcommand{\ZZ}{\ensuremath{\mathbb{Z}}}
\newcommand{\QQ}{\ensuremath{\mathbb{Q}}}
\newcommand{\RR}{\ensuremath{\mathbb{R}}}
\newcommand{\CC}{\ensuremath{\mathbb{C}}}
\newcommand{\sm}{\ensuremath{\setminus}}
\newcommand{\ssq}{\ensuremath{\subseteq}}
\newcommand{\rar}{\ensuremath{\rightarrow}}
\newcommand{\longrar}{\ensuremath{\longrightarrow}}
\newcommand{\vn}{\ensuremath{\varnothing}}
\newcommand{\bs}[1]{\ensuremath{\mathbf{#1}}}
\providecommand{\abs}[1]{\lvert#1\rvert}
\numberwithin{equation}{section}
\title{A note on Gabor frames in finite dimensions}
\author{Romanos-Diogenes Malikiosis}
\date{}
\begin{document}

\begin{abstract}
 The purpose of this note is to present a proof of the existence of Gabor frames in general linear position
 in all finite dimensions. The tools developed in this note are also helpful towards an explicit construction
 of such a frame, which is carried out in the last section. This result has applications in signal recovery
 through erasure
 channels, operator identification, and time-frequency analysis.
\end{abstract}

\maketitle

 \section{Introduction}
 
 A Gabor frame is the set of all time-frequency translates of a single vector in $\CC^N$, and consists of
 $N^2$ vectors. The question that we will tackle in this paper, is whether any subset of $N$ vectors is
 linearly independent. In this case we shall say that the Gabor frame is in general linear position.
 
 The existence of Gabor frames in general linear position in all dimensions has very deep implications in
 signal processing; such a frame is an \emph{equal norm tight frame that is maximally robust to erasures},
 a fact which allows the recovery of the original object even if many packets of encrypted information are lost.
 The only previously known frames with this property are the harmonic frames. This problem may also be viewed as
 the discrete version of the HRT conjecture \cite{HRT96}, which asserts that any finite set
 of time-frequency translates of a nonzero function in $L^2(\RR)$ is linearly independent. 
 
 In the next two subsections of the introduction, we present the 
 main result, as well as some basic consequences, in order to make this note self-contained. However, the 
 focus will be on the proof of the main result, so for completeness we refer the reader to \cite{KPR08,LPW05,P13},
 and the references within, which provide more details regarding applications. Besides,
 \cite{P13} offers an excellent introduction on the subject of Gabor frames in finite dimensions.
 
 Then, the paper is organized as follows: in Section 2 we present the idea behind the proof in \cite{LPW05};
 sections 3 and 4 form the proof of the
 main result; and lastly, we present a construction of a Gabor frame in general linear position in the
 last section.

 \subsection{Setup and main result}
 
 We define the following two linear operators of $\CC^N$: the \emph{cyclic shift} operator
 $T:\CC^N\rar\CC^N$, which is given by
 \[T(x_0,x_1,\dotsc,x_{N-1})=(x_{N-1},x_0,\dotsc,x_{N-2}),\]
 and the \emph{modulation} operator $M:\CC^N\rar\CC^N$, which is given by
 \[M(x_0,x_1,\dotsc,x_{N-1})=(x_0,\om x_1,\dotsc,\om^{N-1}x_{N-1}),\]
 where $\om=e^{2\pi i/N}$. These two operators generate a group called the \emph{Weyl-Heisenberg group},
 otherwise called the \emph{generalized Pauli group}. The relation $MT=\om TM$, shows that the Weyl-Heisenberg
 group modulo phases is isomorphic to $(\ZZ/N\ZZ)^2$, and a complete set of representatives
 is given by $\pi(\ka,\la)=M^{\la}T^{\ka}$, for $(\ka,\la)\in(\ZZ/N\ZZ)^2$ (for more information regarding
 the algebraic structure of the Weyl-Heisenberg group in relation to a problem of similar nature (SIC-POVM), we refer
 the reader to \cite{A05}).
 
 \begin{defn}
  A Gabor frame $(\vphi,\La)$ with $\vphi\in\CC^N$ and $\La\ssq(\ZZ/N\ZZ)^2$, is the set of all vectors of
  the form $\pi(\ka,\la)\vphi$, where  $(\ka,\la)\in\La$. We say that $(\vphi,\La)$ is in general linear position, if
  $(\vphi,\La')$ is a basis for every $\La'\ssq\La$ with $\abs{\La'}=N$.
 \end{defn}
 
 In this note, we shall prove the following:
 
 \begin{thm}\label{main}
  For every positive integer $N$, there is some $\vphi\in\CC^N$, such that $(\vphi,(\ZZ/N\ZZ)^2)$ is in general
  linear position. Moreover, the set of such $\vphi$ is of full measure; its complement has Lebesgue measure zero.
 \end{thm}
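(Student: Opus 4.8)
The plan is to recast \emph{general linear position} as the simultaneous non-vanishing of finitely many polynomials in the coordinates of $\vphi$, and then to show that each such polynomial is not identically zero. Write $\vphi=(\vphi_0,\dotsc,\vphi_{N-1})$ and record the action of the generators: since $(T^\ka\vphi)_j=\vphi_{j-\ka}$ and $M^\la$ scales the $j$-th coordinate by $\om^{j\la}$, the vector $\pi(\ka,\la)\vphi=M^\la T^\ka\vphi$ has $j$-th coordinate $\om^{j\la}\vphi_{j-\ka}$ (indices mod $N$). Fix $\La'\ssq(\ZZ/N\ZZ)^2$ with $\abs{\La'}=N$ and form the $N\times N$ matrix $A_{\La'}(\vphi)$ whose column indexed by $(\ka,\la)\in\La'$ is $\pi(\ka,\la)\vphi$. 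Then $(\vphi,\La')$ is a basis precisely when $D_{\La'}(\vphi):=\det A_{\La'}(\vphi)\neq 0$, and reading off the entries shows $D_{\La'}$ is a homogeneous polynomial of degree $N$ in $\vphi_0,\dotsc,\vphi_{N-1}$ with coefficients in $\QQ(\om)$.

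Granting, for the moment, that $D_{\La'}\not\equiv 0$ for every such $\La'$, the theorem follows at once. There are only finitely many subsets $\La'$ (namely $\binom{N^2}{N}$ of them), so the set of bad vectors is $\bigcup_{\La'}\set{\vphi:D_{\La'}(\vphi)=0}$, a finite union of zero sets of nonzero polynomials. The zero set of a nonzero polynomial on $\CC^N$ is a proper subvariety, hence of Lebesgue measure zero, and a finite union of such sets is again null. Its complement --- the set of $\vphi$ putting $(\vphi,(\ZZ/N\ZZ)^2)$ in general linear position --- is therefore of full measure, and in particular nonempty. This settles both assertions of Theorem \ref{main} simultaneously.

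The entire difficulty is thus concentrated in the single algebraic statement $D_{\La'}\not\equiv 0$, which is where I expect the main obstacle to lie. Labelling $\La'=\set{(\ka_c,\la_c):1\le c\le N}$, the determinant expansion reads $D_{\La'}(\vphi)=\sum_{\sig}\operatorname{sgn}(\sig)\prod_{c}\om^{\sig(c)\la_c}\vphi_{\sig(c)-\ka_c}$, the sum over bijections $\sig$ from columns to rows, so one is forced to control the cancellation among the phases $\om^{\sig(c)\la_c}$. My approach would be to isolate a single monomial $\vphi_0^{m_0}\cdots\vphi_{N-1}^{m_{N-1}}$ whose coefficient is manifestly nonzero: I would specialize $\vphi_j=z^{a_j}$ for a cleverly chosen integer vector $(a_j)$, turning $D_{\La'}$ into a Laurent polynomial in the single variable $z$, and then argue that a vertex of its Newton polygon --- an extreme exponent in $z$ --- is attained by only one bijection $\sig$ (or by a sign- and phase-coherent family), so that the corresponding coefficient cannot vanish.

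The delicate point is choosing $(a_j)$ uniformly well against the lattice geometry of the arbitrary index set $\La'$, and this is exactly where the arithmetic of $N$ enters. When $N$ is prime the relevant phase sums are governed by a Chebotarev-type non-vanishing of the minors of the Fourier matrix --- the mechanism behind \cite{LPW05}, reviewed in Section 2 --- but for composite $N$ this fails, and I would expect to organize the argument inductively along the divisors of $N$, reducing the general case to prime-power building blocks via the subgroup structure of $(\ZZ/N\ZZ)^2$. Guaranteeing that no accidental vanishing of the extremal coefficient survives this reduction is, in my view, the crux of the whole proof.
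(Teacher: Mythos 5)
Your reduction of the theorem to the single algebraic claim $D_{\La'}\not\equiv 0$ for each of the finitely many $\La'$, followed by the measure-zero argument for a finite union of zero sets of nonzero polynomials, is correct and is exactly the framework the paper uses (Section 2, following \cite{LPW05}). The computation of the entries $\om^{j\la}\vphi_{j-\ka}$ and the homogeneity of degree $N$ are also right.

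However, the crux --- that every $D_{\La'}$ is a nonzero polynomial --- is not proved; you say so yourself, and your sketch of how to attack it is both incomplete and partly misdirected. You have not specified which monomial to isolate, which weights $(a_j)$ to use, or why the extremal coefficient survives cancellation; and your guess that the composite case requires an induction along the divisors of $N$ with prime-power building blocks is not how the difficulty resolves. The paper's argument is uniform in $N$ and needs no Chebotarev-type input at all. The monomial it isolates is the \emph{consecutive index monomial}: group the columns of $D$ by the translation parameter $\ka$ into blocks of sizes $l_\ka$, and take the monomial coming from the partition of the rows into consecutive blocks $A_0,\dotsc,A_{N-1}$ of those sizes. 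The payoff of this choice is that the corresponding coefficient factors as $\prod_\ka \det(D_\ka(A_\ka))$, and each factor is, up to a phase and a monomial, a Vandermonde determinant $V(\om^{\la_1},\dotsc,\om^{\la_{l_\ka}})$ in \emph{distinct} roots of unity --- nonzero for every $N$, prime or not (Proposition \ref{unique}). This is precisely what replaces the Fourier minors of the prime case.

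The remaining issue, uniqueness of that monomial (so that no other permutation contributes the same monomial and cancels it), is settled by a second-moment argument: to each contributing partition $\sig$ one attaches the random variable $X_\sig$ whose law is given by the exponent vector, and one shows $E[X^2]\le E[X_\sig^2]$ with equality only for trivial $\sig$, via the equality case of the rearrangement inequality (Lemma \ref{equal}, Theorem \ref{EV}). Two technical points there would also have bitten your Newton-polygon plan: the indices $\sig(n)-b_n$ live in $\ZZ/N\ZZ$, so one must control the wrap-around (the paper does this with the lift $\sig'$ and the increasing reparametrization $f$), and one must first translate $\La$ so that $m_\ka-\ka\ge0$ for all $\ka$ (Lemma \ref{translation}). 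It is worth noting that your proposed specialization $\vphi_j=z^{a_j}$ with $a_j=j^2$ is in fact exactly the right weight --- it is the substitution the paper uses in Section \ref{construct}, where the exponent of $z$ becomes $N\cdot E[X_\sig^2]$ --- so your instinct about which functional separates the monomials was sound; what is missing is the identification of the extremal monomial, the proof that it is the strict minimizer, and the Vandermonde factorization showing its coefficient cannot vanish.
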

 
 So far, this theorem has been proven for $N$ prime \cite{LPW05}, and numerical solutions have been
 given for $N=4,6$ \cite{KPR08} and $N=8$ \cite{DBBA12}.
 
 \subsection{Consequences}

 We need some definitions first:
 
 \begin{defn}
  Let $N$ be a positive integers, and $\set{\vphi_k}_{k\in K}$ a finite set of vectors in $\CC^N$. If the
  inequalities
  \[c_1\norm{f}_2^2\leq\sum_{k\in K}\abs{\scal{f,\vphi_k}}\leq c_2\norm{f}_2^2\]
  are true for all $f\in\CC^N$, for some $0<c_1\leq c_2$, then $\set{\vphi_k}_{k\in K}$ is called a
  \emph{frame} for $\CC^N$. It is called \emph{tight}, if we can take $c_1=c_2$, and if $\norm{\vphi_k}_2=C>0$ for all
  $k\in K$, then it is called an \emph{equal norm tight frame}. If any subset of $\leq N$ vectors in 
  $\set{\vphi_k}_{k\in K}$ is linearly independent, then we shall say that the frame is \emph{maximally
  robust to erasures}.
 \end{defn}
 
 The set of vectors $(\vphi,(\ZZ/N\ZZ)^2)$ is indeed an equal norm tight frame \cite{LPW05,P13}. 
 A major consequence of Theorem \ref{main} is the fact that this
 Gabor frame is also maximally robust to erasures; actually, this
 is true for all vectors $\vphi$, except for a those belonging to a set of Lebesgue measure zero.
 
 %We proceed with the following two definitions.
 
 \begin{defn}
  Let $\mc{H}$ be a linear space of operators, mapping $\CC^N$ to $\CC^M$. $\mc{H}$ is called \emph{identifiable}
  with identifier $\vphi\in\CC^N$, if the map $H\mapsto H\vphi$ from $\mc{H}$ to $\CC^M$ is injective.
 \end{defn}
 
 We will denote by $\mc{H}_{\La}$ the (complex) linear space of operators that is spanned by $\pi(\ka,\la)$,
 $(\ka,\la)\in\La$.
 
 \begin{defn}
  The short-time Fourier transform $V_{\vphi}:\CC^N\longrightarrow\CC^{N^2}$ with respect to the \emph{window}
  $\vphi\in\CC^N$ is given by
  \[V_{\vphi}f(\ka,\la)=\scal{f,\pi(\ka,\la)\vphi},\]
  for all $f\in\CC^N$, $\ka,\la\in\ZZ/N\ZZ$. Denote by $A_{\vphi}$ the matrix representation of $V_{\vphi}$,
  under some ordering of $(\ZZ/N\ZZ)^2$.
 \end{defn}
 
 Theorem \ref{main} and Theorem 5.7 in \cite{KPR08} yield the following theorem.
 
 \begin{thm}
  Let $N$ be a positive integer. Then, for almost all $\vphi\in\CC^N$, the following equivalent conditions hold.
  \begin{enumerate}
   \item Every minor of $A_{\vphi}$ of order $N$ is nonzero.
   \item The Gabor frame $(\vphi,(\ZZ/N\ZZ)^2)$ is in general linear position.
   \item The Gabor frame $(\vphi,(\ZZ/N\ZZ)^2)$ is an equal norm tight frame that is maximally robust to 
   erasures.
   \item For all $f\in\CC^N\sm\set{\bs 0}$, we have $\abs{\supp(V_{\vphi}f)}\geq N^2-N+1$.
   \item For all $f\in\CC^N\sm\set{\bs 0}$, $V_{\vphi}f$, and therefore $f$ is completely determined by its 
   values on a set $\La$ with $\abs{\La}=N$.
   \item $\mc{H}_{\La}$ is identifiable by $\vphi$ if and only if $\abs{\La}=N$.
  \end{enumerate}
 \end{thm}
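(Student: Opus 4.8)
The plan is to deduce all six statements from Theorem~\ref{main} together with the operator-identification results of \cite{KPR08}. The crucial observation is that conditions (1)--(6) are equivalent \emph{for each fixed} $\vphi$, so that once Theorem~\ref{main} supplies one of them on a set of full measure, all of them hold there. The natural entry point is condition (2), which is verbatim the conclusion of Theorem~\ref{main}: the set of $\vphi$ for which $(\vphi,(\ZZ/N\ZZ)^2)$ is in general linear position has full Lebesgue measure, its complement being null. It therefore suffices to establish the chain (1)$\Lrar$(2)$\Lrar\cdots\Lrar$(6) and then invoke Theorem~\ref{main}.

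First I would record the elementary reformulations. Since $A_{\vphi}$ is the $N^2\times N$ matrix whose row indexed by $(\ka,\la)$ is $\overline{\pi(\ka,\la)\vphi}$, an order-$N$ minor of $A_{\vphi}$ is, up to complex conjugation, the determinant of the $N\times N$ matrix whose rows are $\pi(\ka,\la)\vphi$ for $(\ka,\la)\in\La'$, where $\La'\ssq(\ZZ/N\ZZ)^2$ with $\abs{\La'}=N$. Such a minor is nonzero precisely when those $N$ frame vectors form a basis, so (1)$\Lrar$(2). For (2)$\Lrar$(3), each $\pi(\ka,\la)$ is unitary, so the frame is automatically equal norm, and it is tight for every $\vphi$ by the standard resolution of the identity over the full group \cite{LPW05,P13}; hence the only additional content of (3) is maximal robustness to erasures. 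Because the frame consists of $N^2\geq N$ vectors, any subset of size $\leq N$ may be enlarged to a subset of size exactly $N$, so every $\leq N$-subset is linearly independent if and only if every $N$-subset is a basis, which is (2).

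For (2)$\Lrar$(4) I would use that $V_{\vphi}f(\ka,\la)=0$ means $f\perp\pi(\ka,\la)\vphi$, so the zero set of $V_{\vphi}f$ indexes exactly the frame vectors orthogonal to $f$. A set of $N$ frame vectors lies in a hyperplane, hence is linearly dependent, precisely when some nonzero $f$ is orthogonal to all of them, i.e.\ when $V_{\vphi}f$ has at least $N$ zeros; thus (2) fails if and only if some $f\neq\bs 0$ satisfies $\abs{\supp(V_{\vphi}f)}\leq N^2-N$, which is the negation of (4). The equivalence (2)$\Lrar$(5) is the same fact read as reconstruction: $f$ is recoverable from $(V_{\vphi}f(\ka,\la))_{(\ka,\la)\in\La}$ with $\abs{\La}=N$ exactly when the corresponding order-$N$ submatrix of $A_{\vphi}$ is invertible, i.e.\ its minor is nonzero.

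Finally, the identification-theoretic statement (6) is the content imported from Theorem~5.7 of \cite{KPR08}: writing $H=\sum_{(\ka,\la)\in\La}c_{\ka,\la}\pi(\ka,\la)\in\mc{H}_{\La}$, the map $H\mapsto H\vphi$ sends the coefficient vector to $\sum_{(\ka,\la)\in\La}c_{\ka,\la}\,\pi(\ka,\la)\vphi$; since the operators $\pi(\ka,\la)$ are linearly independent (they form a basis of the full matrix algebra), this map is injective exactly when $\set{\pi(\ka,\la)\vphi}_{(\ka,\la)\in\La}$ is linearly independent, which recovers (2) when $\abs{\La}=N$ and is impossible on dimensional grounds when $\abs{\La}>N$. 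Having assembled the equivalences, I would conclude by Theorem~\ref{main}: condition (2) holds off a null set, hence so do (1) and (3)--(6). The only genuinely substantial input is Theorem~\ref{main} itself; the equivalences are a repackaging, and the one point requiring care is the precise quantifier in (5)--(6) together with the correct appeal to the operator-identification dictionary of \cite{KPR08}.
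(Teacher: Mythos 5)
Your proposal is correct and follows essentially the same route as the paper, which gives no argument beyond citing Theorem~5.7 of \cite{KPR08} for the equivalence of (1)--(6) and then invoking Theorem~\ref{main} to obtain condition (2) off a null set. You merely spell out the equivalences that the paper delegates to \cite{KPR08}, and your sketches of them are sound.
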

 
 Construction of such a vector $\vphi$, satisfying all of the above conditions is accomplished in Section
 \ref{construct}.

 As mentioned above, Theorem \ref{main} shows us that there is a way to recover encrypted signals through
 erasure channels, even if we lose a great amount of information packets. This is accomplished in the
 following way; if $\set{\vphi_k}_{k\in K}$ is
 a Gabor frame in general linear position, we encode information in the form of a vector
 $f\in \CC^N$ as follows: we send through a channel the Hermitian inner products $\scal{f,\vphi_k}$, and
 we assume that this is a channel with \emph{erasures}, which means that some of these products could be
 lost (but the recipient knows which indices $k$ correspond to these lost products). The recipient 
 receives $\scal{f,\vphi_k}$, where $k\in K'\ssq K$. Can we reconstruct $f$ from this information? The answer
 is yes, as long as $\abs{K'}\geq N$, by finding a dual frame $\set{\tilde{\vphi}_k}_{k\in K'}$, and then use
 the formula
 \[f=\sum_{k\in K'}\scal{f,\vphi_k}\tilde{\vphi}_k.\]
 The reason that this reconstruction is possible follows from the fact that any $N$ vectors from 
 the Gabor frame $\set{\vphi_k}_{k\in K}$ are linearly independent.

 \section{Summary of the proof for $N$ prime}
 
 For any $\La\ssq(\ZZ/N\ZZ)^2$ with $\abs{\La}=N$, we let
 the operators $\pi(\ka,\la)$ act on the variable vector $z=(z_0,\dotsc,z_{N-1})$, for $(\ka,\la)\in\La$. The 
 coordinates of the vectors form a $N\times N$ matrix, so $(z,\La)$ forms a basis if and only if the
 determinant of this matrix is nonzero. As $z$ is a variable vector, the determinant is a homogeneous
 polynomial in $z_0,\dotsc,z_{N-1}$, so we ask whether this polynomial is identically zero or not.
 Lawrence, Pfander, and Walnut \cite{LPW05} proved that every such polynomial is nonzero; therefore, 
 the zero set has
 Lebesgue measure zero, and the union of the zero sets of all such polynomials is still of Lebesgue measure
 zero, because they are finitely many. So, any vector not belonging to this union, say $\vphi$, forms a
 Gabor system $(\vphi,(\ZZ/N\ZZ)^2)$ in general linear position.
 
 %\bigskip
 
 It is important to analyze how the authors in \cite{LPW05} proved that such a polynomial is nonzero; they isolated a 
 certain monomial, and then showed that its coefficient is a product of minors of the Fourier matrix
 up to a phase. Then, by Chebotarev's theorem (every minor of the Fourier matrix in dimension $p$
 is nonzero, for prime $p$) we can deduce that this coefficient is nonzero. In fact, they proved something
 stronger: that the determinant of every submatrix of the $N\times N^2$ matrix that is formed by the
 column vectors  $\pi(\ka,\la)z$ is a nonzero polynomial. 
 
 %\bigskip
 
 Let $D$ be such a $N\times N$ submatrix.
 We define the monomial $p_D$ which is obtained as follows: if $N=1$, take $p_D$ to be the
 only variable that appears in $D$. If $N>1$, take the variable that appears in $D$ with minimal index
 (i.~e. least value for its subscript),
 which will be $z_0$,
 then erase the column and row that correspond in this entry and repeat the process for the $(N-1)\times (N-1)$
 submatrix obtained this way. Define by $p_D$ the product of all these variables. Even though the choice
 of an entry with minimal index might not be unique, it turns out that $p_D$
 is well defined, i.~e. it is independent of the choice of variable at every step, as long as the variable
 we choose at each step
 has minimal index. For more details, we refer the reader to \cite{LPW05}.
 
 %\bigskip
 
 The monomial obtained this way, shall be called the \emph{lowest index monomial}, for the following reason: if
 we list all monomials that appear in the formal expansion of the determinant of $D$, then $p_D$ is the first
 in alphabetical order, given the ordering $z_{j-1}<z_j$, $1\leq j\leq N-1$, assuming that we write the 
 variables of each monomial in increasing order. Every monomial can be obtained through the diagonals of $D$;
 a diagonal of $D$ is simply a set of $N$ entries of $D$, no two of which lie in the same row or column.
 Clearly, $D$ has $N!$ diagonals.
 
 %\bigskip
 
 We associate to matrix $D$ the $N$-tuple $(l_0,l_1,\dotsc,l_{N-1})$, where $l_{\ka}$ is the number of columns in $D$
 of the form $\pi(\ka,\la)z$. After rearranging the columns (which could only change the sign of the determinant),
 we can write 
 \[D=(D_0|D_1|\dotsb|D_{N-1}),\]
 so that all columns of $D$ of the form $\pi(\ka,\la)z$ form the $N\times l_{\ka}$ submatrix $D_{\ka}$
 (it is understood that the columns in $D_{\ka}$ are written in increasing order, in terms of $\la$). By definition,
 \[l_0+l_1+\dotsb+l_{N-1}=N.\]
 We label the rows of $D$ by $0,1,\dotsc,N-1$, the $0$th row being the top one, and the $(N-1)$th row being the bottom one.
 Consider an ordered partition of $\ZZ/N\ZZ$ into sets $B_0,B_1,\dotsc,B_{N-1}$, such that $\abs{B_{\ka}}=l_{\ka}$. 
 We denote by
 $D_{\ka}(B_{\ka})$ the $l_{\ka}\times l_{\ka}$ submatrix of $D_{\ka}$, whose rows belong to the set $B_{\ka}$, when $B_{\ka}\neq\vn$.
  From this
 construction, it is clear that there are $l_0!l_1!\dotsm l_{N-1}!$ diagonals for which each element belongs to either one of the
 $D_{\ka}(B_{\ka})$, and they all give rise to the \emph{same} monomial that appears in $\det(D)$. 
 It is also evident that any monomial can be obtained by $K\prod_{\ka=0}^{N-1}l_{\ka}!$ diagonals, where $K$ 
 is a nonnegative integer.
 In other words, any partition of $\ZZ/N\ZZ$ gives rise to a monomial; we will say that a monomial appears 
 \emph{uniquely} in $D$, if it corresponds to a unique partition of $\ZZ/N\ZZ$.
 
 %\bigskip
 
 However, for any such partition there is some $\sig\in S_N$ such that $\sig(A_{\ka})=B_{\ka}$, for all $\ka$, where 
 \begin{equation}\label{partition}
 \begin{split}
A_0 &= \set{0,1,\dotsc,l_0-1}\\
A_1 &= \set{l_0,l_0+1,\dotsc,l_0+l_1-1}\\
&\	\vdots\\
A_{N-1} &= \set{l_0+l_1+\dotsb+l_{N-2},\dotsc,N-1}.
\end{split}
\end{equation}
In other words, any permutation $\sig\in S_N$ gives rise to a monomial in $\det(D)$, which we will denote by $Z^{\sig}$. From this
definition, it is obvious that if $\tau\in S_N$ leaves all the sets $A_0,\dotsc,A_{N-1}$ invariant, then $Z^{\sig}=Z^{\sig\tau}$.
We will call such a permutation \emph{trivial}, and denote the subgroup of all trivial permutations by $\Ga$, so that the map
\[S_N/\Ga\ni\sig\mapsto Z^{\sig}\]
is well defined. These definitions
yield
\[\det(D)=\sum_{\sig\in S_N/\Ga}c_{\sig}Z^{\sig},\]
and the previous discussion implies that
\[c_{\sig}Z^{\sig}=\pm\prod_{\ka=0}^{N-1}\det(D_{\ka}(\sig(A_{\ka}))),\]
where $\det(D_{\ka}(B_{\ka}))=1$ when $B_{\ka}=\vn$.
So, a monomial $Z$ in $\det(D)$ appears uniquely, if there is a unique $\sig\in S_N/\Ga$, such that $Z=Z^{\sig}$.
 
 The authors of \cite{LPW05} proved that the lowest index monomial appears uniquely; it turns out that the coefficient
  is a product of Fourier minors, up to phase, so when $N$ is prime, this coefficient is nonzero, as follows
 from Chebotarev's theorem.

 \section{The consecutive index monomial}\label{CI}
 
 In the general case for $N$, the lowest index monomial is still obtained uniquely; however, not all Fourier minors are nonzero
 when $N$ is composite, so it might appear with coefficient zero. For this reason, we will try to focus on another monomial.
 
 \begin{defn}\label{CIM}
 The \emph{consecutive index monomial} (CI monomial for short) is the monomial that corresponds to the partition
 $A_0,A_1,\dotsc,A_{N-1}$ described in \eqref{partition}. Equivalently, it is the monomial $Z^{\iota}$, where
 $\iota$ is the the identity permutation of $\ZZ/N\ZZ$. We will simply denote this monomial by $Z$.
 \end{defn}
 
 As we will prove later,  the indices of the variables appearing in the CI monomial are consecutive, when viewed
 as elements of $\ZZ/N\ZZ$; this means that $N-1$ are $0$ are considered consecutive elements, so for example, $z_0z_1z_{N-1}^{N-2}$
 is a monomial whose indices are consecutive.
 
 %\pagebreak
 
 \begin{prop}\label{unique}
 If the CI monomial appears uniquely in $D$, then its coefficient in $\det(D)$ is nonzero.
 \end{prop}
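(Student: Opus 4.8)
The plan is to exploit the hypothesis in two distinct ways. \textbf{First}, since the CI monomial appears uniquely, it arises from the single class $\iota\in S_N/\Ga$, so its coefficient in $\det(D)$ is precisely $c_{\iota}$: no other term $c_{\sig}Z^{\sig}$ equals it, and hence no cancellation can occur. It therefore suffices to show that $c_{\iota}\neq 0$. By the identity $c_{\sig}Z^{\sig}=\pm\prod_{\ka}\det(D_{\ka}(\sig(A_{\ka})))$ established just before the statement, specialised to $\sig=\iota$, one has
\[
c_{\iota}Z=\pm\prod_{\ka=0}^{N-1}\det\bigl(D_{\ka}(A_{\ka})\bigr),
\]
so I would reduce everything to showing that each factor $\det(D_{\ka}(A_{\ka}))$ is a nonzero polynomial.

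\textbf{Second}, I would compute these factors explicitly. From the definitions $T(x)_j=x_{j-1}$ and $M(x)_j=\om^j x_j$ one obtains $(\pi(\ka,\la)z)_j=\om^{\la j}z_{j-\ka}$, so the entry of $D_{\ka}$ in row $j$ and in the column indexed by $\la$ equals $\om^{\la j}z_{j-\ka}$. The crucial observation is that in a fixed row $j$ the variable $z_{j-\ka}$ does not depend on $\la$; hence it factors out of that row, giving
\[
\det\bigl(D_{\ka}(A_{\ka})\bigr)=\Bigl(\prod_{j\in A_{\ka}}z_{j-\ka}\Bigr)\cdot\det\bigl(\om^{\la j}\bigr)_{j\in A_{\ka},\,\la},
\]
where the second factor is a minor of the Fourier matrix. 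Consequently $c_{\iota}=\pm\prod_{\ka}\det(\om^{\la j})_{j\in A_{\ka},\,\la}$ is, up to a phase, a product of such Fourier minors.

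The \textbf{key step}, and the place where the consecutive structure is used, is to identify each of these minors as a Vandermonde determinant. Since $A_{\ka}=\{a_{\ka},a_{\ka}+1,\dotsc,a_{\ka}+l_{\ka}-1\}$ consists of consecutive integers, with $a_{\ka}=l_0+\dotsb+l_{\ka-1}$, I would factor $\om^{a_{\ka}\la}$ out of the column indexed by $\la$ to obtain
\[
\det\bigl(\om^{\la j}\bigr)_{j\in A_{\ka},\,\la}=\Bigl(\prod_{\la}\om^{a_{\ka}\la}\Bigr)\det\bigl((\om^{\la})^{m}\bigr)_{0\le m\le l_{\ka}-1,\,\la},
\]
and the last determinant is the Vandermonde determinant in the numbers $\om^{\la}$. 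Because the relevant values of $\la$ are distinct modulo $N$ and $\om$ is a primitive $N$th root of unity, the numbers $\om^{\la}$ are pairwise distinct, so this Vandermonde determinant is nonzero. Hence every factor $\det(D_{\ka}(A_{\ka}))$ is a nonzero polynomial, and therefore $c_{\iota}\neq 0$.

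I expect the only real subtlety to be this Vandermonde identification: it is precisely the consecutiveness of the row blocks $A_{\ka}$ that converts what would otherwise be an arbitrary Fourier minor, possibly singular when $N$ is composite, into a Vandermonde matrix, which is automatically nonsingular. This is exactly why the CI monomial succeeds where the lowest index monomial may fail, and it is the heart of the argument. The uniqueness hypothesis plays only the supporting role of guaranteeing that the coefficient of $Z$ is the single term $c_{\iota}$ rather than a sum that might cancel; establishing that uniqueness is a separate matter, to be handled in the sections that follow.
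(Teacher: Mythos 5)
Your proposal is correct and follows essentially the same route as the paper: factor the variable $z_{j-\ka}$ out of each row of $D_{\ka}(A_{\ka})$ and the phase $\om^{m_{\ka}\la}$ out of each column, leaving a Vandermonde determinant in the distinct roots of unity $\om^{\la}$, which is nonzero. Your remark that the consecutiveness of the row blocks is what turns a possibly singular Fourier minor into a guaranteed-nonzero Vandermonde is exactly the point of the construction.
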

 
 \begin{proof}
 This monomial appears in $\det(D)$ as
 \[\prod_{\ka=0}^{N-1}\det(D_{\ka}(A_{\ka})),\]
 where the $A_{\ka}$ are given by \eqref{partition}. Let 
 $\pi(\ka,\la_i)z$ be the columns of $D_{\ka}$, for $1\leq i\leq l_{\ka}$. 
 In $D_{\ka}$, every variable appears only in the entries of a row; in particular,
 $z_j$ appears only in the entries of the $(j+\ka)$th row of $D_{\ka}$. Putting 
 \[m_{\ka}=l_0+l_1+\dotsb+l_{\ka-1},\]
 we get
 \begin{eqnarray*}
 \det(D_{\ka}(A_{\ka})) &=& \begin{vmatrix}
 							\om^{m_{\ka}\la_1}z_{m_{\ka}-\ka}& \om^{m_{\ka}\la_2}z_{m_{\ka}-\ka} &\hdots &\om^{m_{\ka}\la_{l_{\ka}}}z_{m_{\ka}-\ka}\\
 	\om^{(m_{\ka}+1)\la_1}z_{m_{\ka}+1-\ka}& \om^{(m_{\ka}+1)\la_2}z_{m_{\ka}+1-\ka} &\hdots &\om^{(m_{\ka}+1)\la_{l_{\ka}}}z_{m_{\ka}+1-\ka}\\
 	\vdots& \vdots& \ddots& \vdots\\
 	\om^{(m_{\ka+1}-1)\la_1}z_{m_{\ka+1}-1-\ka}& \om^{(m_{\ka+1}-1)\la_2}z_{m_{\ka+1}-1-\ka}& \hdots& \om^{(m_{\ka+1}-1)\la_{l_{\ka}}}z_{m_{\ka+1}-1-\ka}\\
 												\end{vmatrix}\\
 &=& \om^{m_{\ka}(\la_1+\dotsb+\la_{l_{\ka}})}z_{m_{\ka}-\ka}\dotsm z_{m_{\ka+1}-1-\ka}
 \begin{vmatrix}
 1& 1& \hdots &1\\
 \om^{\la_1}& \om^{\la_2}& \hdots &\om^{\la_{l_{\ka}}}\\
 \om^{2\la_1}& \om^{2\la_2}& \hdots &\om^{2\la_{l_{\ka}}}\\
 \vdots& \vdots& \ddots& \vdots\\
 \om^{(l_{\ka}-1)\la_1}& \om^{(l_{\ka}-1)\la_2}& \hdots &\om^{(l_{\ka}-1)\la_{l_{\ka}}}\\
 \end{vmatrix}\\
 &=& \om^{m_{\ka}(\la_1+\dotsb+\la_{l_{\ka}})}V(\om^{\la_1},\dotsc,\om^{\la_{l_{\ka}}})z_{m_{\ka}-\ka}\dotsm z_{m_{\ka+1}-1-\ka},
 \end{eqnarray*}
 where $V(x_1,x_2,\dotsc,x_n)$ is the standard Vandermonde determinant
 \[V(x_1,x_2,\dotsc,x_n)=\begin{vmatrix}
 1& 1& \hdots& 1\\
 x_1& x_2& \hdots& x_n\\
 x_1^2& x_2^2& \hdots& x_n^2\\
 \vdots& \vdots& \ddots& \vdots\\
 x_1^{n-1}& x_2^{n-1}& \hdots& x_n^{n-1}\\
 \end{vmatrix}
 =\prod_{1\leq i<j\leq n}(x_j-x_i).
 \]
 Since the $N$th roots of unity $\om^{\la_1},\dotsc,\om^{\la_{l_{\ka}}}$ are all distinct, the above Vandermonde determinant is
 nonvanishing; this is true for all $\ka$, thus we conclude that the coefficient of this CI monomial is nonzero.
 \end{proof}
 
 The only thing that remains to show now is that the CI monomial appears uniquely in $D$. Before proceeding to the proof of this
 statement, we will need to describe the equality case of the rearrangement inequality.
 
 \begin{lemma}\label{equal}
  Let $a_0<a_1<\dotsc<a_{N-1}$ and 
 \[b_0=b_1=\dotsb=b_{l_0-1}<b_{l_0}=\dotsb=b_{l_0+l_1-1}<\dotsb<b_{l_0+l_1+\dotsb+l_{N-2}}=\dotsb=b_{N-1}\]
 be real numbers. In other words, the increasing sequence $b_n$ is constant precisely
 on the intervals $A_0,A_1,\dotsc,A_{N-1}$. Then,
 \begin{equation}\label{rearr}
   \sum_{n=0}^{N-1}a_nb_n\geq\sum_{n=0}^{N-1} a_{\sig(n)}b_n
 \end{equation}
 for any permutation $\sig$ of the set $\set{1,2,\dotsc,N}$. 
  Equality occurs if and only if $\sig$ leaves the intervals $A_0,A_1,\dotsc,A_{N-1}$ invariant, i.~e. when
  $\sig$ is trivial.
 \end{lemma}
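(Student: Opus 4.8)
The plan is to derive both the inequality \eqref{rearr} and the equality characterization at once from an Abel summation (level--set decomposition) of the increasing sequence $b_n$, which reduces everything to comparing partial sums of the $a_n$. First I would write the telescoping identity
\[
 b_n = b_0 + \sum_{t=1}^{N-1}(b_t-b_{t-1})\,\mathbf{1}[\,n\geq t\,],
\]
in which every increment $b_t-b_{t-1}$ is nonnegative and is \emph{strictly} positive exactly at the block boundaries $t=m_k:=l_0+\dotsb+l_{k-1}$, the first index of each nonempty $A_k$. Substituting into $\sum_n a_{\sig(n)}b_n$ and using that $\sum_n a_{\sig(n)}=\sum_n a_n$ does not depend on $\sig$, I obtain
\[
 \sum_{n=0}^{N-1} a_{\sig(n)} b_n = b_0\sum_{n=0}^{N-1} a_n + \sum_{t=1}^{N-1}(b_t-b_{t-1})\sum_{n=t}^{N-1} a_{\sig(n)}.
\]

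The core estimate is now purely about the $a_n$. For each fixed $t$ the quantity $\sum_{n=t}^{N-1} a_{\sig(n)}$ is a sum of $N-t$ distinct members of $\set{a_0,\dotsc,a_{N-1}}$, hence is at most the sum of the $N-t$ largest values, namely $\sum_{n=t}^{N-1} a_n$; and because the $a_n$ are pairwise distinct, this maximum is attained \emph{uniquely}, precisely when $\set{\sig(t),\dotsc,\sig(N-1)}=\set{t,\dotsc,N-1}$. Inserting these bounds into the displayed identity, each weighted by the nonnegative factor $b_t-b_{t-1}$, gives $\sum_n a_{\sig(n)}b_n\leq\sum_n a_n b_n$, which is exactly \eqref{rearr}. (The classical adjacent--transposition argument also proves the inequality, but this decomposition makes the equality analysis transparent.)

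For the equality case, equality in the identity forces the bound $\sum_{n=t}^{N-1} a_{\sig(n)}\leq\sum_{n=t}^{N-1} a_n$ to be tight for every index carrying a strictly positive weight, i.e.\ for every block boundary $t=m_k$. By the uniqueness noted above, this means $\sig$ maps $\set{m_k,\dotsc,N-1}$ onto itself, equivalently that $\sig$ preserves each prefix $\set{0,\dotsc,m_k-1}=A_0\cup\dotsb\cup A_{k-1}$. Since $A_k=\set{0,\dotsc,m_{k+1}-1}\setminus\set{0,\dotsc,m_k-1}$ is the difference of two such preserved prefixes, $\sig$ preserves every block $A_k$, so $\sig$ is trivial. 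Conversely, a trivial $\sig$ permutes within each block, where $b$ is constant; hence it leaves each partial sum $\sum_{n=t}^{N-1} a_{\sig(n)}$ at a block boundary unchanged and equality holds.

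The one place demanding care is the bookkeeping of empty blocks and the exact location of the jumps of $b_n$: I would check that $b_t-b_{t-1}>0$ holds precisely at the starting indices of the nonempty blocks (empty blocks contribute no jump and are trivially preserved), so that the tightness conditions pin down exactly the invariance of the partition $A_0,\dotsc,A_{N-1}$. The essential analytic input is the strict monotonicity $a_0<\dotsb<a_{N-1}$, which forces a \emph{unique} maximizing subset at each level; this is the only step where strictness is used, and it is what upgrades the plain rearrangement inequality to the sharp equality statement.
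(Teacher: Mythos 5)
Your proof is correct, but it takes a genuinely different route from the paper. The paper does not reprove the rearrangement inequality at all: it cites Hardy--Littlewood--P\'olya (Theorem 368) for the fact that $\sum_n a_{\sig(n)}b_n$ is maximal precisely when the sequences $a_{\sig(n)}$ and $b_n$ are \emph{similarly ordered}, and then, for a nontrivial $\sig$, takes the minimal $\ka$ with $\sig(A_{\ka})\neq A_{\ka}$ and exhibits a pair of indices $m,n$ with $b_n>b_m$ but $a_{\sig(n)}<a_{\sig(m)}$, contradicting similar ordering and hence forcing strict inequality. You instead give a self-contained argument by Abel summation: decomposing $b_n=b_0+\sum_t(b_t-b_{t-1})\mathbf{1}[n\geq t]$ reduces both the inequality and its equality case to the observation that, since the $a_n$ are pairwise distinct, the sum of $N-t$ of them is uniquely maximized by the top $N-t$ values; tightness at every jump of $b$ then forces $\sig$ to preserve each prefix $A_0\cup\dotsb\cup A_{k-1}$ and hence each block. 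Your bookkeeping of the jumps (they occur exactly at the starting indices $m_k$ of the nonempty blocks, and empty blocks are vacuously invariant) is right, and the converse direction is handled correctly. What your approach buys is independence from the external reference and a mechanism that makes the equality analysis completely explicit; what the paper's approach buys is brevity, at the cost of leaning on the precise equality statement of HLP's theorem. Both arguments use the strict monotonicity of the $a_n$ in exactly one place, and both are valid.
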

 
 \begin{proof}
  The inequality \eqref{rearr} is well-known and a proof is included in \cite{HLP34}, Chapter X. Furthermore, Theorem
  368 in \cite{HLP34} implies that
  \[\sum_{n=0}^{N-1} a_{\sig(n)}b_n\]
  attains its maximal value precisely when the finite sequences $a_{\sig(n)}$ and $b_n$ are simlarly ordered. When
  $\sig$ is trivial, equality is obvious in \eqref{rearr}. Suppose that $\sig$ is nontrivial, and let $\ka$
  be the minimal index such that $\sig(A_{\ka})\neq A_{\ka}$. Since $\sig$ leaves the sets $A_0,\dotsc,A_{\ka-1}$
  invariant and $A_{\ka}\cap\sig(A_{\ka})\neq A_{\ka}$, there are indices $\la,\mu>\ka$, such that
  $A_{\ka}\cap\sig(A_{\la})\neq\vn$ and $\sig(A_{\ka})\cap A_{\mu}\neq\vn$. Next, let $m$, $n$ be such that
  $\sig(n)\in A_{\ka}\cap\sig(A_{\la})$ and $\sig(m)\in\sig(A_{\ka})\cap A_{\mu}$. This implies that
  $b_n>b_m$, but $a_{\sig(n)}<a_{\sig(m)}$, so the sequences $a_{\sig(n)}$ and $b_n$ are not similarly ordered,
  therefore we have strict inequality in \eqref{rearr}.
 \end{proof}

 \section{Random variables associated to monomials}\label{RV}

 In section \ref{CI}, we saw that any permutation $\sig\in S_N$ gives rise to an ordered partition $\sig(A_0),\dotsc,\sig(A_{N-1})$
 of $\ZZ/N\ZZ$, which in turn gives rise to a monomial of $\det(D)$, say
 \[Z^{\sig}=z_0^{\al_0}z_1^{\al_1}\dotsm z_{N-1}^{\al_{N-1}},\]
 where $\al_i$ are nonnegative integers, with $\al_0+\al_1+\dotsb+\al_{N-1}=N$. To this monomial, we associate the discrete
 random variable $X_{\sig}$, which satisfies 
 \[P[X_{\sig}=i]=\frac{\al_{i}}{N}.\]
 So, to any $\sig\in S_N$, we associate a discrete random variable, $X_{\sig}$, and we shall say that such a random variable
 is obtained uniquely if $X_{\sig}=X_{\sig\tau}$ if and only if $\tau$ is trivial. We denote the random variable
 associated to the CI monomial by $X$. In order to complete the
 proof of Theorem 1.1, we need to show $X$ is obtained
 uniquely. The variables in the monomial that appears in $\det(D_{\ka}(A_{\ka}))$ have consecutive indices,
 in particular, the indices form the set 
 \[A_{\ka}-\ka=\set{m_{\ka}-\ka,m_{\ka}-\ka+1,\dotsc,m_{\ka+1}-(\ka+1)}=[m_{\ka}-\ka,m_{\ka+1}-(\ka+1)]\]
where we put
\[m_{\ka}= l_0+l_1+\dotsb+l_{\ka-1},\]
as before, and $m_0=0$. If $A_{\ka}=\vn$, then $m_{\ka}-\ka>m_{\ka+1}-(\ka+1)$, and $A_{\ka}-\ka=\vn$.
Now consider the $A_{\ka}-\ka$ as sets of \emph{integers}, rather than residues $\bmod N$.
\begin{prop}\label{interval}
 With notation as above, we have
 \[\bigcup_{\ka=0}^{N-1}(A_{\ka}-\ka)=[\al,\be],\]
 for some integers $\al$, $\be$.
\end{prop}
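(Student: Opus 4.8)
The plan is to reformulate each shifted set as an honest interval of integers and then show that, as $\ka$ runs over $\set{0,1,\dotsc,N-1}$, these intervals fit together into a single block with no gaps. Set $e_\ka=m_\ka-\ka$ for $0\le\ka\le N$. Since the right endpoint of $A_\ka-\ka$ is $m_{\ka+1}-(\ka+1)=e_{\ka+1}$, we have $A_\ka-\ka=[e_\ka,e_{\ka+1}]$, which is nonempty exactly when $e_{\ka+1}\ge e_\ka$, i.e. when $l_\ka\ge1$ (for $l_\ka=0$ one has $e_{\ka+1}=e_\ka-1$ and the interval is empty). Moreover $e_0=m_0=0$ and $e_N=m_N-N=N-N=0$, and each increment satisfies $e_{\ka+1}-e_\ka=l_\ka-1\ge-1$. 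Thus I regard $e_0,e_1,\dotsc,e_N$ as a closed walk on $\ZZ$ starting and ending at $0$, whose forward or flat steps are precisely the nonempty shifted intervals. Put $\al=\min_\ka e_\ka$ and $\be=\max_\ka e_\ka$; then $\bigcup_\ka(A_\ka-\ka)\ssq[\al,\be]$ is immediate, and it remains to prove the reverse inclusion.

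For the reverse inclusion I would use a crossing argument. Fix an integer $v$ with $\al\le v<\be$ and look at how the walk crosses the level halfway between $v$ and $v+1$. Because $e_0=e_N=0$, the walk returns to its starting height, so the number of upward crossings of this level equals the number of downward crossings; this is a one-line telescoping of the quantity that records whether $e_\ka$ lies above the level. On the other hand the walk attains the value $\al\le v$ and the value $\be\ge v+1$, hence it lies on both sides of the level and must cross it at least once. Combining these two facts, there is at least one upward crossing, i.e. an index $\ka$ with $e_\ka\le v<v+1\le e_{\ka+1}$. Such a step is forward, so $A_\ka-\ka=[e_\ka,e_{\ka+1}]$ is nonempty and contains both $v$ and $v+1$. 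Letting $v$ range over $\set{\al,\al+1,\dotsc,\be-1}$ shows every integer of $[\al,\be]$ lies in some $A_\ka-\ka$, which is the desired equality.

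The one genuine difficulty is the presence of empty blocks $A_\ka=\vn$. These make the shifted intervals neither nested nor sorted by left endpoint, and they let the walk drift below the part of $[\al,\be]$ already covered through a string of downward unit steps, so a naive induction on consecutive nonempty intervals runs into the possibility of a gap. The closed-walk structure is exactly what excludes this: the obligation to return to $e_N=0$ forces every downward excursion to be undone by a later upward crossing, and that upward crossing re-covers the intervening integers. I expect formalizing this compensation, most cleanly through the equality of the upward and downward crossing counts, to be the crux of the argument, while the reduction to $[e_\ka,e_{\ka+1}]$ and the verification $e_0=e_N=0$ are routine.
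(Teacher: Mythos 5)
Your proof is correct, and it takes a genuinely different route from the paper's. Both arguments rest on the same two observations --- that $A_{\ka}-\ka=[e_{\ka},e_{\ka+1}]$ with $e_{\ka}=m_{\ka}-\ka$, and that $e_0=e_N=0$ --- but the paper establishes the reverse inclusion $[\al,\be]\ssq\bigcup_{\ka}(A_{\ka}-\ka)$ by a three-way case analysis on the target element $\de$ (namely $\de=\al$, then $\al<\de\leq 0$, then $\de>0$), in each case producing an explicit witness index through a minimality or maximality selection. You replace this with a single uniform counting argument: the walk $e_0,e_1,\dotsc,e_N$ is closed, so for each level between consecutive integers $v$ and $v+1$ with $\al\leq v<\be$ the numbers of upward and downward crossings agree (by telescoping the indicator of lying above the level), and since the walk visits both sides of the level there is at least one crossing, hence at least one upward crossing $e_{\ka}\leq v<v+1\leq e_{\ka+1}$, whose step is a nonempty interval containing both $v$ and $v+1$. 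This buys a cleaner, case-free handling of the empty blocks $A_{\ka}=\vn$ (the downward unit steps of the walk), which is precisely where the paper's case analysis has to work hardest; the price is that the covering index is no longer explicit. One trivial point to patch: when $\al=\be$ your quantifier over $v\in\set{\al,\dotsc,\be-1}$ is vacuous, but in that case $\al=\be=0$ (since $e_0=0$ forces $\al\leq 0\leq\be$), and the union, being nonempty because the blocks have total cardinality $N\geq 1$, is then forced to equal $\set{0}=[\al,\be]$.
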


\begin{proof}
 If we put 
 \[\al=\min_{0\leq \ka\leq N-1}(m_{\ka}-\ka),\	\be=\max_{0\leq \ka\leq N-1}(m_{\ka}-\ka),\]
 then we obviously have
 \[\bigcup_{\ka=0}^{N-1}(A_{\ka}-\ka)\ssq[\al,\be].\]
 Now let $\de\in[\al,\be]$ be arbitrary. Suppose first that $\de=\al$. If $\al=0$, then $\de\in A_0=[m_0,m_1-1]$. If
 $\al<0$, let $\ka$ be an index such that $m_{\ka}-\ka=\al$. Obviously, $\ka<N$, and 
 $\de\in A_{\ka}-\ka=[m_{\ka}-\ka,m_{\ka+1}-(\ka+1)]$. Next, let $\de>\al$. If $\de\leq0$, let $\ka$ be the maximal
 index such that $m_{\ka}-\ka=\al$ (since $m_N-N=0$ and $\al<0$, we must have $\ka<N$), and let $\la$ be the minimal
 index satisfying $\ka<\la\leq N$ and $m_{\la}-\la\geq\de$. This shows that $\de\in A_{\la-1}-(\la-1)$. Lastly, if
 $\de>0$, take $\ka$ to be the minimal index satisfying $m_{\ka+1}-(\ka+1)\geq\de$ (since $m_0-0<\de$, we must
 have $\ka\geq0$); then, $\de\in A_{\ka}-\ka$.
 
 In every case, we have proven that any $\de$ in $[\al,\be]$, belongs to some set $A_{\ka}-\ka$. This 
 establishes the reverse inclusion as well, thus completing the proof.
\end{proof}

Since $\sum_{\ka=0}^{N-1}\abs{A_{\ka}-\ka}=N$, we will have $\be-\al\leq N-1$. Next, we will show that if we
translate the set $\La$, then the corresponding polynomials that we obtain as $\det(D)$ are essentially the same.

\begin{lemma}\label{translation}
 Let $\La\ssq(\ZZ/N\ZZ)^2$ with $\abs{\La}=N$, and let $D$ be a $N\times N$ matrix whose columns are $\pi(\ka,\la)z$,
 for $(\ka,\la)\in\La$. If $\La'$ is a translation of $\La$, and $D'$ the corresponding matrix, then
 \[\det(D')=c\det(D),\]
 for some nonzero $c$.
\end{lemma}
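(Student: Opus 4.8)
The plan is to show that replacing $\La$ by a translate amounts to multiplying $D$ on the left by a fixed invertible operator and rescaling its columns by roots of unity, neither of which can annihilate the determinant. Write the translation as $\La'=\La+(\ka_0,\la_0)$ for some fixed $(\ka_0,\la_0)\in(\ZZ/N\ZZ)^2$; since translation is a bijection of $(\ZZ/N\ZZ)^2$, we still have $\abs{\La'}=N$, so $D'$ is again $N\times N$. I would index the columns of $D$ by $(\ka,\la)\in\La$ and those of $D'$ by the corresponding $(\ka+\ka_0,\la+\la_0)\in\La'$, in the same order; any other ordering only changes $\det(D')$ by a sign, which is harmless since we allow an arbitrary nonzero $c$.

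The key step is to extract the non-commutativity phase. From $MT=\om TM$ one obtains $T^{\ka_0}M^{\la}=\om^{-\ka_0\la}M^{\la}T^{\ka_0}$, and therefore $\pi(\ka_0,\la_0)\pi(\ka,\la)=M^{\la_0}T^{\ka_0}M^{\la}T^{\ka}=\om^{-\ka_0\la}\pi(\ka+\ka_0,\la+\la_0)$, which I would rewrite as
\[\pi(\ka+\ka_0,\la+\la_0)=\om^{\ka_0\la}\,\pi(\ka_0,\la_0)\,\pi(\ka,\la).\]
Applying both sides to the variable vector $z$ shows that each column of $D'$ equals $\om^{\ka_0\la}$ times $\pi(\ka_0,\la_0)$ applied to the corresponding column $\pi(\ka,\la)z$ of $D$. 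The point to stress here is that $z$ is the \emph{same} variable vector on both sides: we are transforming the existing columns of $D$, not re-evaluating $D$ at a shifted argument, so this is an identity of matrices whose entries are linear forms in $z_0,\dotsc,z_{N-1}$.

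It then follows that $D'=\pi(\ka_0,\la_0)\,D\,\De$, where $\De$ is the diagonal matrix with entries $\om^{\ka_0\la}$ indexed by $(\ka,\la)\in\La$, and taking determinants gives $\det(D')=\det(\pi(\ka_0,\la_0))\,\det(\De)\,\det(D)$. Since $M$ and $T$ are unitary, $\pi(\ka_0,\la_0)=M^{\la_0}T^{\ka_0}$ is invertible and $\det(\pi(\ka_0,\la_0))\neq0$, while $\det(\De)=\prod_{(\ka,\la)\in\La}\om^{\ka_0\la}$ is a root of unity, hence nonzero; so $c=\det(\pi(\ka_0,\la_0))\det(\De)\neq0$, as desired. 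I do not expect a genuine obstacle here — the argument is essentially bookkeeping of the phase factor — and it suffices to treat a single generating translation $(\ka_0,\la_0)$, since every translation is of this form.
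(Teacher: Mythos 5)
Your proof is correct and takes essentially the same route as the paper: the paper splits the translation into the two generating cases $(\ga,0)$ and $(0,\ga)$ and in each identifies $D'$ as $D$ left-multiplied by a fixed invertible (permutation or diagonal) matrix with columns rescaled by phases, while you carry out the same phase bookkeeping in a single step for a general $(\ka_0,\la_0)$ via the relation $\pi(\ka+\ka_0,\la+\la_0)=\om^{\ka_0\la}\pi(\ka_0,\la_0)\pi(\ka,\la)$. The factorization $\det(D')=\det(\pi(\ka_0,\la_0))\det(\De)\det(D)$ with both factors nonzero is exactly the content of the paper's argument.
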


\begin{proof}
 It suffices to consider translations under vectors of the form $(\ga,0)$ or $(0,\ga)$. Suppose first that
 $\La'=\La-(\ga,0)$. Then, the columns of $D'$ have the form
 \[M^{\la}T^{\ka-\ga}z,\]
 as $(\ka,\la)$ runs throught the elements of $\La$. But since $MT=\om TM$, then 
 \[M^{\la}T^{\ka-\ga}z=\om^{-\la\ga}T^{-\ga}M^{\la}T^{\ka}z,\]
 hence
 \[D'=\om^{-\ga\sum_{(\ka,\la)\in\La}\la}E_{-\ga}M,\]
 where $E_{-\ga}$ is the permutation matrix, that moves the $j$ row to the $j+\ga$ row. Since $\det(E_{-\ga})=\pm1$,
 we get that $\det(D')=c\det(D)$, for some nonzero $c$.
 
 If $\La'=\La-(0,\ga)$, then the columns of $D'$ are
 \[M^{-\ga}M^{\la}T^{\ka}z,\]
 where $(\ka,\la)$ runs through the elements of $\La$. Eventually, we deduce that $D'$ is obtained by $D$, by
 multiplying the $j$ row of $D$ by $\om^{-j\ga}$, and thus we arrive to the same conclusion.
\end{proof}

An immediate consequence is that $\det(D)$ is nonzero if and only if $\det(D')$ is nonzero. Furthermore,
it is evident that 
the polynomial $\det(D')$ is obtained from $\det(D)$ by the following cyclic shift of the variables:
\[z_j \longrightarrow z_{j+\ga}.\]
We wish to translate $\La$ in order to obtain a new matrix with $\al=0$, where
\[\al=\min_{0\leq\ka\leq N-1}(m_{\ka}-\ka).\]
If $\al=0$, we do not need to translate $\La$; however, if $\al<0$, then
let $\ga$ be such that $\al=m_{\ga}-\ga$, and consider the translated set $\La'=\La-(\ga,0)$, obtaining a new
matrix $D'$, whose columns are described in the proof of Lemma \ref{translation}.

% Since
%$0\leq m_{\ka}\leq N$ and $0\leq\ka\leq N-1$, we have
%\[\abs{m_{\ka}-\ka}\leq N-1\]
%(the value $N$ is not attained, since $m_0=0$). Furthermore, if $0\leq\la<\ka\leq N-1$, then
%\begin{eqnarray*}
%(m_{\ka}-\ka)-(m_{\la}-\la) &=& l_{\la}+l_{\la+1}+\dotsb+l_{\ka-1}-(\ka-\la)\\
%&<& l_0+l_1+\dotsb+l_{N-1}\\
%&=& N,
%\end{eqnarray*}
%and
%\[
% (m_{\ka}-\ka)-(m_{\la}-\la)\geq \ka-\la>N,
%\]
%so
%\[\abs{(m_{\ka}-\ka)-(m_{\la}-\la)}<N,\]
%for all $\ka$, $\la$. Since $A_{\ka}-\ka=[m_{\ka}-\ka,m_{\ka+1}-(\ka+1)]$, the union of all these intervals of
%\emph{integers} has the form $[\al,\be]$, where $\be-\al<N$ (in other words, the sets $A_{\ka}-\ka$ 
%\emph{do not wrap around} in $\ZZ/N\ZZ$). So,
%
%and let $\ga$ be such that $\al=m_{\ga}-\ga$.
%Since $0\in A_0$, we have $\al\leq0$. If $\al<0$, instead of $\La\ssq(\ZZ/N\ZZ)^2$ we consider $\La-(\ga,0)$,
%a translation of the set $\La$, so the column $\pi(\ka,\la)z=M^{\la}T^{\ka}z$ is transformed to
%$\pi(\ka-\ga,\la)z=M^{\la}T^{\ka-\ga}z$, and the matrix $M$ is transformed to $M'$. 
%therefore, $\det(M)$ is a nonzero polynomial if and only if $\det(M')$ is a nonzero polynomial. 
If
$l_{\ka}'$ denotes the columns of $D'$ of the form $\pi(\ka,\la)z$, then we have
\[l_{\ka}'=l_{\ka+\ga},\]
where the indices are considered as residues $\bmod N$. Define
\[m_{\ka}'=l_0'+l_1'+\dotsb+l_{\ka-1}'.\]
If $\ga+\ka-1<N$ (as integers), then
\begin{eqnarray*}
 m_{\ka}'-\ka &=& l_0'+l_1'+\dotsb+l_{\ka-1}'-\ka\\
 &=& l_{\ga}+l_{\ga+1}+\dotsb+l_{\ga+\ka-1}-\ka\\
 &=& m_{\ga+\ka}-m_{\ga}-\ka\\
 &=& (m_{\ga+\ka}-(\ga+\ka))-(m_{\ga}-\ga)\\
 &\geq& \al-\al=0,
\end{eqnarray*}
and if $\ga+\ka-1\geq N$, then
\begin{eqnarray*}
 m_{\ka}'-\ka &=& l_0'+l_1'+\dotsb+l_{\ka-1}'-\ka\\
 &=& l_{\la}+l_{\la+1}+\dotsb+l_{N-1}+l_0+\dotsb+l_{\ga+\ka-1-N}-\ka\\
 &=& N-m_{\ga}+m_{\ga+\ka-N}-\ka\\
 &=& (m_{\ga+\ka-N}-(\ga+\ka-N))-(m_{\ga}-\ga)\\
 &\geq& \al-\al=0,
\end{eqnarray*}
so we see that $m_{\ka}'-\ka\geq0$ for all $\ka$. Since $\det(D')$ and $\det(D)$ are the same polynomials up
to a nonzero multiplicative constant by Lemma \ref{translation}, we may assume without loss of generality that $m_{\ka}-\ka\geq0$
for all $\ka$. It turns out that the random variable associated to the CI monomial exhibits some unique
statistical properties, related to the other variables, $X_{\sig}$.

%Next, we remind that to any ordered partition of $\ZZ/N\ZZ$ into $N$ sets $B_{\ka}$, with $\abs{B_{\ka}}=l_{\ka}$
%we associate a monomial, and to any such monomial we correspond a discrete random variable, as described in
%the beginning of this section. Let $X$ denote the random variable associated to the CI monomial, which arises
%from the partition $A_0,\dotsc,A_{N-1}$, described in Definition \ref{CIM}. Any other ordered partition of $\ZZ/N\ZZ$ 
%into $N$ sets $B_{\ka}$, with $\abs{B_{\ka}}=l_{\ka}$, satisfies $\sig(A_{\ka})=B_{\ka}$, for some 
%permutation $\sig$ of $\set{0,1,\dotsc,N-1}$. So, to any permutation $\sig$ we associate a discrete random
%variable, $X_{\sig}$. 

%If $\sig$ satisfies $\sig(A_{\ka})=A_{\ka}$ for all $\ka$, then obviously $X_{\sig}=X$; we will call such a
%permutation \emph{trivial}, and \emph{nontrivial} otherwise (there are $\prod_{\ka=0}^{N-1}l_{\ka}!$ trivial
%permutations, which is another way to see that any random variable or monomial corresponds to 
%$K\prod_{\ka=0}^{N-1}l_{\ka}!$ diagonals of $M$, for some nonegative integer $K$).

\begin{thm}\label{EV}
 Assuming that $D$ satisfies $m_{\ka}-\ka\geq0$ for all $\ka$, we have $E[X]\leq E[X_{\sig}]$
 for all permutations $\sig$. Furthermore, $E[X^2]\leq E[X_{\sig}^2]$, 
 with equality if and only if $\sig$ is trivial.
\end{thm}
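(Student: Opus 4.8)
The plan is to convert both moment statements into sums over $S_N$ and then isolate a clean rearrangement part from a correction coming from the cyclic reduction $\bmod N$. First I would set up a dictionary. In the block $D_\ka$ the entry in row $j$ carries the variable $z_{(j-\ka)\bmod N}$, so letting $g\colon\set{0,\dots,N-1}\to\set{0,\dots,N-1}$ be the step function with $g(n)=\ka$ for $n\in A_\ka$ (nondecreasing, constant on the blocks), the monomial $Z^\sig$ records, for each row $j\in\sig(A_\ka)$, the index $(j-\ka)\bmod N$. Reindexing $j=\sig(n)$ (so that $\ka=g(n)$) yields, for $k=1,2$,
\[
N\,E[X_\sig^k]=\sum_{n=0}^{N-1}\bigl((\sig(n)-g(n))\bmod N\bigr)^k .
\]
The standing hypothesis $m_\ka-\ka\ge0$ is exactly the statement $g(n)\le n$ for all $n$; hence for the identity there is no reduction and $v_n^\iota=n-g(n)=:c_n\in\set{0,\dots,N-1}$, giving $N\,E[X^k]=\sum_n c_n^k$.

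For the first moment, since $\sig(n)-g(n)\in[-(N-1),N-1]$, I write $\eps_n=1$ when $\sig(n)<g(n)$ and $\eps_n=0$ otherwise, so that $(\sig(n)-g(n))\bmod N=\sig(n)-g(n)+N\eps_n$. Summing and using $\sum_n\sig(n)=\sum_n n$ gives $N\,E[X_\sig]=\sum_n c_n+N\sum_n\eps_n$, that is, $E[X_\sig]=E[X]+\abs{\set{n:\sig(n)<g(n)}}\ge E[X]$.

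For the second moment I would square the same expression. Expanding and again using $\sum_n\sig(n)^2=\sum_n n^2$ produces the identity
\[
N\bigl(E[X_\sig^2]-E[X^2]\bigr)=2\sum_{n}(n-\sig(n))\,g(n)\;+\;N\!\!\sum_{n:\,\sig(n)<g(n)}\!\!\bigl(N-2(g(n)-\sig(n))\bigr).
\]
The first sum is $\ge0$ by the rearrangement inequality (Lemma \ref{equal}) applied with the increasing sequence $a_n=n$ and the block-constant sequence $b_n=g(n)$, and it vanishes precisely when $\sig$ is trivial. Consequently, when $\sig$ has no wraparound (no index with $\sig(n)<g(n)$) the correction term is absent and the claim follows, with equality iff $\sig$ is trivial. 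Moreover any trivial $\sig$ fixes each block, so $\sig(n)\in A_{g(n)}$ and thus $\sig(n)\ge m_{g(n)}\ge g(n)$; hence a wraparound forces $\sig$ to be nontrivial, which is consistent with the asserted equality case.

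The hard part is the wraparound case, where the correction term can be negative (when some $g(n)-\sig(n)>N/2$). Here I would run an exchange/unwrapping argument on the ordered partition $\bigl(\sig(A_0),\dots,\sig(A_{N-1})\bigr)$: if a late block is seated on an early row (an index with $\sig(n)<g(n)$), swap the block memberships of two suitably chosen rows so as to strictly decrease $\sum_n v_n^2$; iterating removes all wraparounds and reduces to the clean case above, and since a wraparound already forces $\sig$ nontrivial the final inequality is strict. The hypothesis is exactly what makes an improving swap available: in the equivalent form $\sum_{j\ge\ka}l_j\le N-\ka$ for every $\ka$, it is precisely the (Hall) condition that the blocks admit a wraparound-free seating on the rows—the seating realized by the identity—so a wraparound can be traded for a strictly cheaper seating. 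The main obstacle is locating this swap: the reduction $\bmod N$ destroys the monotonicity (Monge) structure of the cost $((s-\ka)\bmod N)^2$, so the improving move is \emph{not} in general an adjacent transposition and must be selected using the counting supplied by $m_\ka-\ka\ge0$.
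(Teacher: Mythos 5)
Your first-moment argument is complete and coincides with the paper's. Your algebraic identity
\[
N\bigl(E[X_\sigma^2]-E[X^2]\bigr)=2\sum_{n}(n-\sigma(n))\,b_n\;+\;N\!\!\sum_{n:\,\sigma(n)<b_n}\!\!\bigl(N-2(b_n-\sigma(n))\bigr)
\]
is also correct. But the proof of the second-moment inequality --- which is the entire content of the theorem, since its equality case is what makes the CI monomial unique --- is only carried out by you in the wraparound-free case. In the presence of wraparound you yourself observe that the correction term can be negative, and you offer only a sketch of an ``exchange/unwrapping'' argument whose decisive step (locating an improving swap) you explicitly flag as an unresolved obstacle. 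That is a genuine gap, not a missing detail: all of the difficulty of the theorem lives in exactly that case, and no candidate swap is exhibited, let alone shown to terminate with a strict decrease.

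The paper closes this case not by local swaps but by a single global sort of the unwrapped values. With $\sigma'(n)=\sigma(n)$ or $\sigma(n)+N$ according as $\sigma(n)-b_n\ge0$ or $<0$ (your unwrapping), let $f$ be the increasing enumeration of the value set $\sigma(C_1)\cup(\sigma(C_2)+N)$, so that $\sigma'=\tau\circ f$ for some permutation $\tau$ of that set. The key observation is that $f(n)\ge n$ for every $n$: for $n<\abs{C_1}$, $f$ enumerates $\abs{C_1}$ distinct elements of $\set{0,\dots,N-1}$ in increasing order, and for $n\ge\abs{C_1}$ one has $f(n)\ge N>n$. Hence $0\le n-b_n\le f(n)-b_n$ termwise --- this is where the hypothesis $m_\kappa-\kappa\ge0$ enters, making the left side nonnegative so that squaring preserves the inequality --- which gives $E[X^2]\le\frac1N\sum_n(f(n)-b_n)^2$, strictly whenever $C_2\ne\varnothing$. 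The remaining comparison $\frac1N\sum_n(f(n)-b_n)^2\le E[X_\sigma^2]$ is pure rearrangement (Lemma \ref{equal}), since $\tau$ merely permutes the values of $f$ and both $f$ and $b$ are increasing, and its equality case reduces to $\sigma$ being trivial once $C_2=\varnothing$ forces $f$ to be the identity. This two-step comparison is precisely the ``strictly cheaper seating'' your sketch is searching for, obtained in one stroke; to repair your write-up you would need to supply some version of this sorting step, as the mod-$N$ cost indeed has no Monge structure that an adjacent-transposition argument could exploit.
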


\begin{proof}
 Define the sequence $\set{b_n}_{n=0}^{N-1}$ as follows:
 \[b_n=\ka,\text{ if }n\in A_{\ka}.\]
 From the definition of the partition $A_0,\dotsc,A_{N-1}$, it is clear that $b_n$ is increasing, and it is
 constant precisely on the intervals of integers $A_0,\dotsc,A_{N-1}$. All the indices that appear in the
 CI monomial are $n-b_n$, counting multiplicities; since $m_{\ka}-\ka\geq0$ for all $\ka$, if $n\in A_{\ka}$,
 $b_n=\ka$ and $n\geq m_{\ka}$, so $n-b_n\geq m_{\ka}-\ka\geq0$. The indices appearing in the monomial
 associated to the partition $\sig(A_0),\dotsc,\sig(A_{N-1})$ belong to the sets $\sig(A_{\ka})-\ka$,
 or equivalently, they have the form $\sig(n)-b_n$, \emph{as elements of} $\ZZ/N\ZZ$, counting multiplicities.
 Since $\abs{\sig(n)-b_n}<N$, it means that the index corresponding to $n$ is either $\sig(n)-b_n$
 or $\sig(n)-b_n+N$. In both cases, it is greater than or equal to $\sig(n)-b_n$, so
 \[E[X_{\sig}]\geq \frac{1}{N}\sum_{n=0}^{N-1}(\sig(n)-b_n)=\frac{1}{N}\sum_{n=0}^{N-1}(n-b_n)=E[X],\]
% \begin{eqnarray*}
% E[X_{\sig}] &\geq& \frac{1}{N}\sum_{n=0}^{N-1}(\sig(n)-b_n)\\
% &=& \frac{1}{n}\sum_{n=0}^{N-1}\sig(n)-\frac{1}{N}\sum_{n=0}^{N-1}b_n\\
% &=& \frac{1}{N}\sum_{n=0}^{N-1}n-\frac{1}{N}\sum_{n=0}^{N-1}b_n\\
% &=& \frac{1}{N}\sum_{n=0}^{N-1}(n-b_n)\\
% &=& E[X],
% \end{eqnarray*}
 which proves the first part of the theorem. For the second part, define
 \[\sig'(n)=\begin{cases}
             \sig(n), &\text{ if }\sig(n)-b_n\geq0\\
             \sig(n)+N, &\text{ if }\sig(n)-b_n<0,
            \end{cases}
\]
so that
\[E[X_{\sig}]=\frac{1}{N}\sum_{n=0}^{N-1}(\sig'(n)-b_n),\	\text{ and }
E[X_{\sig}^2]=\frac{1}{N}\sum_{n=0}^{N-1}(\sig'(n)-b_n)^2.\]
Let $C_1$ be the set of those $n$ for which $\sig(n)-b_n\geq0$, and $C_2$ be its complement in $\set{0,\dotsc,N-1}$
(we consider them as sets of integers). Also, let $m=\abs{C_1}$, and define 
\[f:\set{0,\dotsc,N-1}\longrar\sig(C_1)\cup(\sig(C_2)+N)\]
to be the unique strictly increasing function from $\set{0,\dotsc,N-1}$ to $\sig(C_1)\cup(\sig(C_2)+N)$. So, 
$f$ satisfies
\[f([0,m-1])=\sig(C_1),\	f([m,N-1])=\sig(C_2)+N,\	\text{ and }f(n)\geq n,\text{ for all }n.\]
Also, $\sig(C_1)\cup(\sig(C_2)+N)$ is the range of $\sig'$, so there is a permutation of $\sig(C_1)\cup(\sig(C_2)+N)$,
say $\tau$, such that
\[\sig'(n)=\tau(f(n)),\]
for all $n$. Since $f(n)\geq n$, we also have $f(n)-b_n\geq n-b_n\geq0$ for all $n$, so
\[E[X^2]\leq\frac{1}{N}\sum_{n=0}^{N-1}(f(n)-b_n)^2.\]
Next, we get
 \begin{eqnarray*}
  E[X_{\sig}^2]-\frac{1}{N}\sum_{n=0}^{N-1}(f(n)-b_n)^2 &=&
  \frac{1}{N}\sum_{n=0}^{N-1}(\sig'(n)-b_n)^2-\frac{1}{N}\sum_{n=0}^{N-1}(f(n)-b_n)^2\\
  &=& \frac{1}{N}\sum_{n=0}^{N-1}(\tau(f(n))-b_n)^2-\frac{1}{N}\sum_{n=0}^{N-1}(f(n)-b_n)^2\\
  &=& \frac{2}{N}\sum_{n=0}^{N-1}f(n)b_n-\frac{2}{N}\sum_{n=0}^{N-1}\tau(f(n))b_n\\
  &\geq& 0,
 \end{eqnarray*}
 by \eqref{rearr}, so eventually
 \begin{equation}\label{squares}
 E[X^2]\leq \frac{1}{N}\sum_{n=0}^{N-1}(f(n)-b_n)^2\leq E[X_{\sig}^2].
 \end{equation}
 When $C_2\neq\vn$, then for $n\in C_2$ we have $f(n)-b_n>n-b_n$, and we get a strict inequality in the
 left-hand side of \eqref{squares}, so if $E[X^2]=E[X_{\sig}^2]$, then $C_2=\vn$ and $f(n)=n$ for all $n$.
 Moreover, $\sig(n)-b_n\geq0$, for all $n$, so 
 \begin{eqnarray*}
 E[X_{\sig}^2]-E[X^2] &=& \frac{1}{N}\sum_{n=0}^{N-1}(\sig(n)-b_n)^2-\frac{1}{N}\sum_{n=0}^{N-1}(n-b_n)^2\\
  &=& \frac{2}{N}\sum_{n=0}^{N-1}nb_n-\frac{2}{N}\sum_{n=0}^{N-1}\sig(n)b_n\\
  &\geq& 0,
 \end{eqnarray*}
 with equality if and only if $\sig$ is trivial by Lemma \ref{equal}, completing the proof.
% The fact that $E(X_{\sig}^2)-\frac{1}{N}\sum_{n=0}^{N-1}(f(n)-b_n)^2\geq0$ follows from the
% rearrangement inequality; however, when $\sig$
%is nontrivial, there is a $\ka$ such that $\sig(A_{\ka})\neq A_{\ka}$, so from Lemma \ref{equal}
%we have strict inequality.
%When $\sig$ is trivial, there is nothing to prove as $X=X_{\sig}$.
 \end{proof}
 %For the second part, $E[X]=E[X_{\sig}]$ implies that the
 %indices appearing to the monomial associated to the partition $\sig(A_0),\dotsc,\sig(A_{N-1})$ are
 %precisely $\sig(n)-b_n$ \emph{as integers}, counting multiplicities. Assuming that $\sig$ is a nontrivial
 %permutation, we get
 %\begin{eqnarray*}
 % \Var(X_{\sig})-\Var(X) &=& \frac{1}{N}\sum_{n=0}^{N-1}(\sig(n)-b_n)^2-\frac{1}{N}\sum_{n=0}^{N-1}(n-b_n)^2\\
 % &=& \frac{2}{N}\sum_{n=0}^{N-1}nb_n-\frac{2}{N}\sum_{n=0}^{N-1}\sig(n)b_n\\
 % &>& 0.
 %\end{eqnarray*}

 \bigskip

\begin{proof}[Proof of Theorem \ref{main}]
We assume without loss of
generality that $m_{\ka}-\ka\geq0$ for all $\ka$.
 From Theorem \ref{EV} we deduce that the CI monomial can be obtained only through the partition $A_0,\dotsc,A_{N-1}$.
 Indeed, if we assume that the CI monomial is also obtained by some nontrivial partition
 $\sig(A_0),\dotsc,\sig(A_{N-1})$, this would show that $X=X_{\sig}$. But this contradicts Theorem \ref{EV}, because
 if $\sig$ nontrivial, then $E[X^2]<E[X_{\sig}^2]$, so these two random variables
 cannot be the same. So, the CI monomial is always obtained uniquely, therefore by Proposition \ref{unique}
 its coefficient in $\det(D)$ is nonzero, hence by the virtue of Lemma above, $\det(D)$ is a nonzero polynomial
 for any choice of
 $\La\ssq(\ZZ/N\ZZ)^2$, with $\abs{\La}=N$. This concludes the proof that Gabor frames exist in general
 linear position, in all dimensions; furthermore, since the zero set of the polynomials $\det(D)$ has Lebesgue
 measure zero, and these polynomials are finitely many, we deduce that the set of vectors generating a Gabor
 frame in general linear position is of full measure.
\end{proof}

\section{Construction}\label{construct}

We continue to use the same notation; we fix the matrix $D$, whose columns have the form $\pi(\ka,\la)z$, 
where $(\ka,\la)\in\La$, for $\abs{\La}=N$. Define $P_{\La}(z)=\det(M)$.

%and define by $P_{\La}(z)$ the polynomial that is
%equal to the determinant of the submatrix of $M$ whose columns $\pi(\ka,\la)z$ satisfy $(\ka,\la)\in\La$, where
%$\abs{\La}=N$.

\begin{lemma}
 Define the polynomial $Q_{\La}(x)\in\QQ(\om)[x]$ by
 \[Q_{\La}(x)=P_{\La}(1,x,x^4,x^9,\dotsc,x^{(N-1)^2}).\]
 Then, $Q_{\La}$ is a nonzero polynomial, for all $\La\ssq(\ZZ/N\ZZ)^2$ with $\abs{\La}=N$.
\end{lemma}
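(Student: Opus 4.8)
The plan is to expand the determinant into monomials and watch what the substitution $z_j \mapsto x^{j^2}$ does to each one. Writing $P_\La(z) = \sum_{\sig \in S_N/\Ga} c_\sig Z^\sig$ with $Z^\sig = z_0^{\al_0}\cdots z_{N-1}^{\al_{N-1}}$, the substitution sends $Z^\sig$ to the single power $x^{\sum_j \al_j j^2} = x^{N\,E[X_\sig^2]}$, with $X_\sig$ the random variable attached to $Z^\sig$ in Section \ref{RV}. Hence
\[Q_\La(x) = \sum_{\sig \in S_N/\Ga} c_\sig\, x^{N\,E[X_\sig^2]},\]
and it is enough to find one exponent $N\,E[X_\sig^2]$ that is realized by a single monomial carrying a nonzero coefficient, for then that term cannot be cancelled.

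In the normalized range $m_\ka - \ka \ge 0$ this is immediate. By Theorem \ref{EV} the consecutive index monomial $Z = Z^\iota$ satisfies $E[X^2] < E[X_\sig^2]$ for every nontrivial $\sig$, so $N\,E[X^2]$ is the strict minimum of all the exponents and $c_\iota x^{N\,E[X^2]}$ is the unique term of lowest degree. Its coefficient is $\pm\prod_\ka V(\om^{\la_1},\dots,\om^{\la_{l_\ka}})$ by Proposition \ref{unique}, a product of Vandermonde determinants in distinct roots of unity (the rows $A_\ka$ being consecutive), hence nonzero with no hypothesis on $N$. So for normalized $\La$ the lowest term survives and $Q_\La \not\equiv 0$.

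The difficulty, which I expect to be the crux, is that this does not reduce to the normalized case. Lemma \ref{translation} matches $P_\La$ with its normalized translate $P_{\La'}$ only up to a nonzero scalar and the cyclic relabelling $z_j \mapsto z_{j+\ga}$, and that relabelling does not commute with $z_j \mapsto x^{j^2}$, since $j^2$ is not invariant modulo $N$; so $Q_{\La'}\not\equiv0$ does not transfer to $Q_\La$, and in a non-normalized frame the CI monomial need not minimize $E[X_\sig^2]$ — its exponent can even coincide with that of another monomial. My way around this is to work in the given frame and take the surviving term to be the global minimizer of $\sum_j \al_j j^2$ over the monomials of $P_\La$. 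I would first show, via the equality case of the rearrangement inequality (Lemma \ref{equal}), that this minimizer is unique (it turns out to be the consecutive index monomial of the normalized translate, re-expressed in the original variables), so its exponent is attained only once. The genuinely hard step is then to prove that this minimizer has a nonzero coefficient: its rows need not be consecutive, so its coefficient is a product of \emph{generalized} Vandermonde minors of the Fourier matrix. For prime $N$ these are all nonzero by Chebotarev's theorem, recovering the known case; for the composite $N$ targeted here they can vanish (for instance when the frequencies in some block sum to zero), and controlling them — or exhibiting an alternative monomial with a provably surviving coefficient — is where the real work of the proof must go.
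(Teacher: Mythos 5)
Your first two paragraphs reproduce the paper's argument for the normalized case correctly, but the obstacle you identify in the third paragraph is not actually there, and since you leave your proposed detour unfinished, that is where your proof has a genuine gap. Lemma \ref{translation} does not merely relate $P_\La$ and $P_{\La'}$ ``up to a nonzero scalar and a cyclic relabelling'': its stated conclusion is $\det(D')=c\det(D)$ with $c\neq 0$, as polynomials in the \emph{same} variables $z_0,\dotsc,z_{N-1}$. Indeed, the $j$th entry of the column $\pi(\ka-\ga,\la)z$ of $D'$ is $\om^{j\la}z_{j+\ga-\ka}=\om^{-\ga\la}\cdot\om^{(j+\ga)\la}z_{(j+\ga)-\ka}$, so $D'$ is obtained from $D$ by a cyclic row permutation followed by rescaling each column by $\om^{-\ga\la}$; hence $P_{\La'}=\pm\,\om^{-\ga\sum\la}P_{\La}$ identically. (The relabelling $z_j\mapsto z_{j+\ga}$ is a \emph{second}, equivalent description of $P_{\La'}$; combining the two shows $P_\La$ is invariant up to a nonzero constant under that relabelling. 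But the reduction never needs to commute the relabelling past $z_j\mapsto x^{j^2}$.) Consequently $Q_{\La}(x)=c^{-1}Q_{\La'}(x)$ directly, the normalized-case argument you give finishes the proof, and this is exactly the paper's proof.

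The alternative you sketch — minimizing $\sum_j\al_j j^2$ in the original, non-normalized frame and then confronting generalized Vandermonde minors of the Fourier matrix — is therefore unnecessary, and in the form you leave it (``where the real work of the proof must go'') it is incomplete: those generalized minors can indeed vanish for composite $N$, and you offer no way around that. The whole point of the normalization is to avoid them: after translating so that $m_\ka-\ka\geq0$, the exponent-minimizing term is the CI monomial of the translated matrix, whose coefficient is a product of \emph{consecutive-row} Vandermonde determinants in distinct roots of unity and hence nonzero, and the proportionality above transports its survival back to $Q_\La$.
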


\begin{proof}
 Again, by Lemma \ref{translation}, we may assume without loss of generality that $m_{\ka}-\ka\geq0$, for all $\ka$. Under the
 substitution $z_n=x^{n^2}$, the monomial
 \[Z^{\sig}=z_0^{\al_0}z_1^{\al_1}\dotsm z_{N-1}^{\al_{N-1}},\]
 becomes
 \[x^{\sum_{n=0}^{N-1}n^2\al_n}=x^{N\cdot E[X_{\sig}^2]},\]
 so if $P_{\La}(z)=\det(D)=\sum_{\sig\in S_N/\Ga}c_{\sig}Z^{\sig}$, then
 \[Q_{\La}(x)=\sum_{\sig\in S_N/\Ga}c_{\sig}x^{N\cdot E[X_{\sig}^2]}.\]
 The coefficient $c=c_{\iota}$ that corresponds to the CI monomial is nonzero, as Proposition \ref{CIM} implies,
 and Theorem \ref{EV} yields
 \[x^{N\cdot E[X^2]}\neq x^{N\cdot E[X_{\sig}^2]}\]
 for all $\sig\notin\Ga$, so $Q_{\La}(x)$ has a nonzero monomial, thus $Q_{\La}$ is a nonzero polynomial.
\end{proof}

Next, we observe that $\deg Q_{\La}\leq N(N-1)^2$, for all $\La$. Therefore:

\begin{cor}\label{unimodular}
 Let $\xi\in\CC$ be either a transcedental number, or an algebraic number whose degree over $\QQ(\om)$ is at
 least $N(N-1)^2+1$. Then, the vector
 \[(1,\xi,\xi^4,\xi^9,\dotsc,\xi^{(N-1)^2}),\]
 generates a Gabor frame in general linear position.
\end{cor}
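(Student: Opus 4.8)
The plan is to prove Corollary \ref{unimodular} by combining the degree bound on $Q_\La$ with a counting argument over all relevant $\La$. The key point is that $Q_\La$ is a nonzero polynomial (by the preceding Lemma) of degree at most $N(N-1)^2$ for each of the finitely many subsets $\La\ssq(\ZZ/N\ZZ)^2$ with $\abs\La=N$, so each $Q_\La$ has at most $N(N-1)^2$ roots in $\CC$. First I would recall that, by the main result, $(\vphi,(\ZZ/N\ZZ)^2)$ is in general linear position precisely when $\det(D)=P_\La(\vphi)\neq0$ for every such $\La$; hence it suffices to exhibit a $\xi$ for which the specialized vector $v_\xi=(1,\xi,\xi^4,\dotsc,\xi^{(N-1)^2})$ satisfies $Q_\La(\xi)=P_\La(v_\xi)\neq0$ simultaneously for all $\La$.

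The heart of the argument is then to show that the hypotheses on $\xi$ force $Q_\La(\xi)\neq0$. I would split into the two cases of the hypothesis. If $\xi$ is transcendental over $\QQ(\om)$, then since $Q_\La\in\QQ(\om)[x]$ is a nonzero polynomial, $Q_\La(\xi)\neq0$ is immediate: a transcendental element cannot satisfy any nonzero polynomial with coefficients in $\QQ(\om)$. If instead $\xi$ is algebraic over $\QQ(\om)$ of degree at least $N(N-1)^2+1$, then its minimal polynomial over $\QQ(\om)$ has degree exceeding $\deg Q_\La$, so $Q_\La$ cannot be a multiple of that minimal polynomial; since $Q_\La(\xi)=0$ would force the minimal polynomial to divide $Q_\La$, and hence $\deg Q_\La\geq N(N-1)^2+1$, contradicting $\deg Q_\La\leq N(N-1)^2$, we again conclude $Q_\La(\xi)\neq0$.

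Having established $Q_\La(\xi)\neq0$ for a single fixed $\La$, I would observe that this argument did not depend on $\La$ at all: the same $\xi$ works uniformly, because the degree bound $\deg Q_\La\leq N(N-1)^2$ holds for every $\La$, and the transcendence or degree hypothesis on $\xi$ is a single condition independent of $\La$. Thus $P_\La(v_\xi)=Q_\La(\xi)\neq0$ for all $\La$ with $\abs\La=N$ simultaneously, which by the definition of general linear position shows that $v_\xi$ generates a Gabor frame in general linear position. I do not expect any serious obstacle here; the corollary is essentially a clean consequence of the two facts that $Q_\La$ is nonzero and that its degree is universally bounded by $N(N-1)^2$. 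The only point requiring minor care is the degree bound itself, which follows because $Q_\La(x)=\sum_\sig c_\sig x^{N\cdot E[X_\sig^2]}$ and each exponent $N\cdot E[X_\sig^2]=\sum_n n^2\al_n$ is maximized when the total degree $N$ is concentrated on the variable $z_{N-1}$, giving at most $N\cdot(N-1)^2$.
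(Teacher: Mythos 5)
Your proposal is correct and follows essentially the same route as the paper: reduce to showing $Q_{\La}(\xi)\neq 0$ for every $\La$, then note that the hypothesis on $\xi$ (transcendence or degree over $\QQ(\om)$ exceeding $N(N-1)^2$) rules out $\xi$ being a root of any nonzero polynomial in $\QQ(\om)[x]$ of degree at most $N(N-1)^2$. The only difference is that you spell out the two cases and the degree bound in more detail than the paper does, which is harmless.
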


\begin{proof}
 It suffices to prove that $Q_{\La}(\xi)\neq0$. But this follows from the fact that $\deg Q_{\La}\leq N(N-1)^2$;
 by hypothesis $\xi$ cannot be the root of any nonzero polynomial in $\QQ(\om)[x]$, whose degree is at
 most $N(N-1)^2$.
\end{proof}

It is evident that there is an abundance of such numbers $\xi$. We could put, for example, $\xi=\pi$, or $\xi=e$.
However, as $N$ is expected to be very large, it would be optimal to control the absolute value of all
coordinates of the above vector, by taking $\xi$ to be a root of unity.

\begin{cor}
 Let $\ze=e^{2\pi i/(N-1)^4}$, or any other primitive root of unity of order $(N-1)^4$, where $N\geq4$.
 Then, the vector
 \[(1,\ze,\ze^4,\ze^9,\dotsc,\ze^{(N-1)^2}),\]
 generates a Gabor frame in general linear position.
\end{cor}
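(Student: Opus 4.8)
The plan is to derive this corollary directly from Corollary \ref{unimodular}, taking $\xi=\ze$. Since $\ze$ is a root of unity, it is algebraic over $\QQ(\om)$, so it suffices to show that its degree over $\QQ(\om)$ is at least $N(N-1)^2+1$. Indeed, once this is known, $\ze$ cannot be a zero of the nonzero polynomial $Q_{\La}\in\QQ(\om)[x]$, whose degree is bounded by $N(N-1)^2$, and the conclusion then follows exactly as in the proof of Corollary \ref{unimodular}.

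First I would compute $[\QQ(\om,\ze):\QQ(\om)]$ using the standard theory of cyclotomic fields. Writing $\QQ(\om)=\QQ(\zeta_N)$ and $\QQ(\ze)=\QQ(\zeta_{(N-1)^4})$, the compositum is $\QQ(\zeta_{\mathrm{lcm}(N,(N-1)^4)})$. The crucial observation is that $\gcd(N,N-1)=1$ forces $\gcd(N,(N-1)^4)=1$, so the least common multiple equals $N(N-1)^4$ and the two fields are linearly disjoint over $\QQ$. Hence $[\QQ(\om,\ze):\QQ]=\phi(N)\phi((N-1)^4)$, where $\phi$ denotes Euler's totient, and dividing by $[\QQ(\om):\QQ]=\phi(N)$ gives
\[[\QQ(\om,\ze):\QQ(\om)]=\phi((N-1)^4)=(N-1)^3\phi(N-1),\]
the last equality being the identity $\phi(m^4)=m^3\phi(m)$.

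It then remains to verify the inequality $(N-1)^3\phi(N-1)\geq N(N-1)^2+1$ for $N\geq4$. Setting $m=N-1\geq3$, this reads $m^3\phi(m)\geq m^3+m^2+1$. Since $\phi(m)\geq2$ for every $m\geq3$, the left-hand side is at least $2m^3$, and $2m^3\geq m^3+m^2+1$ because $m^3\geq m^2+1$ already for $m\geq2$; this settles the bound. Invoking Corollary \ref{unimodular} with $\xi=\ze$ then yields the claim.

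The only delicate point — and the reason the hypothesis $N\geq4$ is imposed — is this final inequality at the smallest admissible value: for $N=3$ one has $m=2$ and $\phi(2)=1$, so $(N-1)^3\phi(N-1)=8<N(N-1)^2+1=13$ and the degree argument collapses. This is harmless, since $N=3$ is prime and already covered by the theory recalled in Section 2. Beyond this, I expect no genuine obstacle: the work lies entirely in invoking the totient formula and the coprimality $\gcd(N,(N-1)^4)=1$ correctly, so that the relative degree is \emph{exactly} $(N-1)^3\phi(N-1)$ and not an unexpectedly smaller value that would invalidate the comparison.
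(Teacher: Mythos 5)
Your proposal is correct and follows essentially the same route as the paper: both use $\gcd(N,(N-1)^4)=1$ to conclude that the degree of $\ze$ over $\QQ(\om)$ equals $\vphi((N-1)^4)=(N-1)^3\vphi(N-1)$, and then bound this below by $2(N-1)^3>N(N-1)^2$ to invoke Corollary \ref{unimodular}. The extra detail you supply on linear disjointness of the cyclotomic fields and the failure at $N=3$ is consistent with, and merely expands upon, the paper's argument.
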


\begin{proof}
 Since $\gcd(N,(N-1)^4)=1$, the degree of $\ze$ over $\QQ(\om)$ is the same as the degree of $\ze$ over $\QQ$,
 which is $\vphi((N-1)^4)=(N-1)^3\vphi(N-1)$. When $N\geq4$, we have
 \[(N-1)^3\vphi(N-1)\geq2(N-1)^3>N(N-1)^2,\]
 therefore by Corollary \ref{unimodular} 
 the vector
 \[(1,\ze,\ze^4,\ze^9,\dotsc,\ze^{(N-1)^2}),\]
 generates a Gabor frame in general linear position.
\end{proof}

\end{document}